\theoremstyle{plain}
\newtheorem{thm}{Theorem}
\newtheorem{lem}[thm]{Lemma}
\newtheorem{prop}[thm]{Proposition}
\theoremstyle{remark}
\newtheorem{rem}[thm]{Remark}
\begin{document}
\title[Regular homotopic deformation]{Regular homotopic deformation of compact surface with boundary and mapping class group}
\author{Susumu Hirose}
\address{Department of Mathematics, 
Faculty of Science and Engineering, 
Saga University, 
Saga, 840--8502 Japan}
\email{hirose@ms.saga-u.ac.jp}
\author{Akira Yasuhara}
\address{Department of Mathematics, 
Tokyo Gakugei University, 
Nukuikita 4--1--1, 
Koganei, Tokyo 184--8501 Japan}
\email{yasuhara@u-gakugei.ac.jp}
\thanks{This research was supported by Grant-in-Aid for 
Scientific Research (C) (No. 20540083 and No. 20540065), 
Japan Society for the Promotion of Science. } 
\begin{abstract}
A necessary and sufficient algebraic condition for 
a diffeomorphism over a surface embedded in $S^3$ to be induced 
by a regular homotopic deformation is discussed, and 
a formula for the number of signed pass moves needed for 
this regular homotopy is given. 
\end{abstract}
\subjclass[2000]{57M60, 57N10}
\maketitle
\baselineskip=20pt
\section{Introduction}
\begin{figure}[ht]
\begin{center}
\includegraphics[height=5cm]{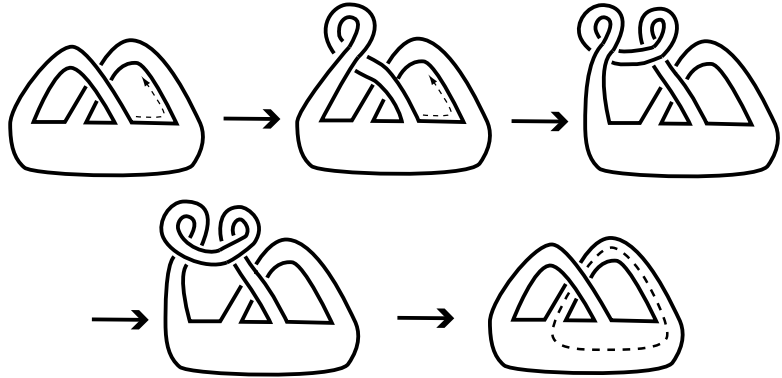}
\end{center}
\caption{An example of regular homotopic deformation of a torus with one boundary. 
The result is a double Dehn-twist about the dashed circle in the last surface. }
\label{fig:deformation}
\end{figure}
Two embeddings $f_1$ and $f_2$ of a surface $F$ into $S^3$ are called 
{\em regular homotopic\/} $f_1 \sim f_2$, if there is a homotopy $h_t \, (0 \leq t \leq 1)$ 
such that $h_0 = f_1$, $h_1 = f_2$, and $h_t$ is 
an immersion of $F$ into $S^3$ for each $t \in [0,1]$. 
Figure \ref{fig:deformation} is an example of regular homotopy. 
This regular homotopy induces a double Dehn-twist about the dashed circle 
drawn in the last surface. 
That is to say, some regular homotopic deformation induces 
an isotopically non-trivial diffeomorphism on the surface. 
In \S 2, we gives a necessary and sufficient algebraic condition for 
a diffeomorphism over the surface to be induced 
by a regular homotopic deformation. 
In this section, we observe that, in order to perform 
the above regular homotopy, pass-moves are enough. 
In \S 3, we give a formula which gives a number of 
pass-moves we need. 
\section{An algebraic characterization of a map induced by a regular homotopic deformation}
Let $F$ be a compact oriented surface with boundary, and let $f$ be a smooth embedding of 
$F$ into $S^3$. 
Let $\mathcal{M}_F$ be the mapping class group of $F$, namely 
$\mathcal{M}_F = \pi_0 ( \{ \phi \in \mathrm{Diff}_+ (F)\, ; \, 
\phi|_{\partial F} = id_{\partial F}\})$. 
In this paper, we mainly investigate on the subgroup of $\mathcal{M}_F$ 
defined by 
$$\mathcal{E}_f = \left\{ \phi \in \mathcal{M}_F\, ; \, f \circ \phi 
\text{ is regular homotopic to } f \right\}.$$
We remark here that, in the above regular homotopy, the boundary of $F$ can be moved. 
We define a quadratic form $q_f : H_1(F; \mathbb{Z}_2) \to \mathbb{Z}_2$ associated 
the embedding $f$ by $q_f (x) = lk (f_*(x), f_*(x)^+) \mod 2$, 
where $f_*(x)^+$ is $f_*(x)$ pushed into the positive normal direction of $f(F)$. 
Let $\mathcal{O}_{q_f}$ be the subgroup of $\mathcal{M}_F$ defined by 
$\mathcal{O}_{q_f}= \{ \phi \in \mathcal{M}_F \, ; \, q_f ( \phi_*(x) ) = q_f (x) 
\text{ for all } x \in H_1(F; \mathbb{Z}_2) \}$. 
Then we have the following theorem. 
\begin{thm}\label{thm:MCG}
$\mathcal{E}_f = \mathcal{O}_{q_f}$. 
\end{thm}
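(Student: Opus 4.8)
The plan is to prove the two inclusions separately; $\mathcal{E}_f\subseteq\mathcal{O}_{q_f}$ is the formal half and $\mathcal{O}_{q_f}\subseteq\mathcal{E}_f$ carries the geometric content. For the first inclusion I would record that $q_f$ depends only on the regular homotopy class of $f$: for a simple closed curve $c$ on $F$, the restriction of a regular homotopy to a small annular neighbourhood of $c$ is a regular homotopy of framed immersed circles in $S^3$, and $q_f([c])=lk(f_*[c],f_*[c]^+)\bmod 2$ is exactly the $\mathbb{Z}_2$-valued invariant of such a framed circle (the class of the framing loop in $\pi_1(SO(3))=\mathbb{Z}_2$), which is locally constant along the homotopy; so the difference $q_{g_1}([c])-q_{g_0}([c])$ vanishes for a regular homotopy $g_t$. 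Granting this, if $\phi\in\mathcal{E}_f$ then $q_{f\circ\phi}=q_f$, and since $(f\circ\phi)_*=f_*\circ\phi_*$ and $\phi$ is orientation preserving we have $q_f(\phi_*x)=q_{f\circ\phi}(x)=q_f(x)$ for all $x$, i.e. $\phi\in\mathcal{O}_{q_f}$.

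For the reverse inclusion I would first observe that $\mathcal{E}_f$ is a subgroup of $\mathcal{M}_F$: precomposing a regular homotopy realizing $f\circ\phi\sim f$ with a diffeomorphism $\psi$ gives a regular homotopy $f\circ\phi\circ\psi\sim f\circ\psi$, so $\mathcal{E}_f$ is closed under composition and inverses. Hence it suffices to realize a generating set of $\mathcal{O}_{q_f}$ by regular homotopies. Here I would use two algebraic inputs: on homology, the image of $\mathcal{O}_{q_f}$ in $\mathrm{Aut}(H_1(F;\mathbb{Z}_2))$ is the orthogonal group of $q_f$, which by the classical structure theory over $\mathbb{Z}_2$ is generated by the transvections $x\mapsto x+\langle x,c\rangle c$ with $q_f(c)=1$ (realized by the Dehn twists $t_c$ with $q_f(c)=1$); and the kernel of $\mathcal{M}_F\to\mathrm{Aut}(H_1(F;\mathbb{Z}_2))$, which is automatically contained in $\mathcal{O}_{q_f}$, is generated by squared Dehn twists $t_c^2$ together with Dehn twists $t_c$ along curves $c$ in the radical of the mod $2$ intersection form (separating and boundary-parallel curves). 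Combining these, $\mathcal{O}_{q_f}$ is generated by $\{t_c:q_f(c)=1\}$, $\{t_c^2:c\text{ a simple closed curve}\}$, and $\{t_c:[c]\text{ in the radical}\}$, and it remains to place each of these generators in $\mathcal{E}_f$.

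The realizations are local. First, $t_c^2$ is produced by the regular homotopy of Figure \ref{fig:deformation}, performed inside a neighbourhood of $f(c)$ using $f(F)$ away from $c$ as the wall through which one finger-moves; by the observation of \S 1 this is a sequence of pass-moves. Second, when $q_f(c)=1$ the annular neighbourhood of $c$ is embedded with odd twisting, and I expect a single such pass-move around $f(c)$ to absorb one half of the double twist and realize $t_c$ itself — this is the step in which the hypothesis $q_f(c)=1$ is genuinely used. Third, for $c$ in the radical, $t_c$ is realized by an isotopy of $f$ that is allowed to move $\partial F$ (spinning the subsurface cut off by $c$, or the relevant boundary component, once around), which is exactly where the freedom "the boundary of $F$ can be moved" in the definition of $\mathcal{E}_f$ enters. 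I expect the main obstacle to be twofold: pinning down and justifying a clean Dehn-twist generating set of $\mathcal{O}_{q_f}$ for a surface with boundary, and carrying out the construction of the second bullet carefully enough to identify the resulting mapping class as precisely $t_c$. Conceptually one can shortcut $\mathcal{O}_{q_f}\subseteq\mathcal{E}_f$ (indeed the whole theorem) via Hirsch--Smale immersion theory, since regular homotopy classes of immersions of $F$ into $S^3$ are classified by $H^1(F;\mathbb{Z}_2)$ with the class of an embedding recorded by its quadratic form, so $q_{f\circ\phi}=q_f$ already forces $f\circ\phi\sim f$; but the explicit pass-move construction is what is needed for the counting formula of \S 3.
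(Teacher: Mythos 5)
Your first inclusion ($\mathcal{E}_f\subseteq\mathcal{O}_{q_f}$) is fine and is in the same spirit as the paper, which also reduces it to the invariance of $q_f$ under regular homotopy restricted to an annular neighbourhood of a curve (the paper quotes Kato for the mod $2$ framing invariance, where you invoke $\pi_1(SO(3))$). The problem is the reverse inclusion, where your main route --- realizing a generating set of $\mathcal{O}_{q_f}$ --- rests on two unproved and genuinely delicate inputs. First, the algebraic generation claims are theorems, not formalities: over $\mathbb{Z}_2$ the orthogonal group of a nondegenerate form is generated by transvections \emph{except} in the four-dimensional Arf-$0$ case (Dieudonn\'e's exception), which already occurs for a genus-two surface with one boundary component, so your asserted generating set can fail as stated; moreover for $k\geq 2$ the form on $H_1(F;\mathbb{Z}_2)$ is degenerate and the structure of its isometry group is not off-the-shelf, the surjectivity of $\mathcal{O}_{q_f}$ onto the isometry group needs an argument, and the claim that the kernel of $\mathcal{M}_F\to\mathrm{Aut}(H_1(F;\mathbb{Z}_2))$ is generated by squared twists and radical twists is itself a deep result even for closed surfaces and is not standard rel boundary. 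Second, and more seriously, the key geometric step --- that when $q_f(c)=1$ a single pass move ``absorbs half of the double twist'' and realizes $t_c$ --- is exactly the crux and is not justified: a pass move of the band with itself changes the self-linking of the core by $\pm 2$, i.e.\ it accounts for an \emph{even} number of half-twists (this is precisely how the paper uses it, Figure \ref{fig:double-twist}), so there is no evident local realization of a single Dehn twist rel the rest of the surface; realizing $t_c$ requires a global construction, and you flag this yourself as an expectation rather than an argument.

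The paper avoids all of this by never touching generators of $\mathcal{O}_{q_f}$. It proves a Pinkall-type classification lemma (Lemma \ref{lem:form}): two embeddings are regular homotopic if and only if their quadratic forms agree, established by choosing a spine, using pass moves (which are crossing changes) to match the knots $f(S_i)$ and $f'(S_i)$, and then absorbing the remaining framing difference --- even by the hypothesis $q_f=q_{f'}$ --- by further pass moves; the theorem is then a two-line formal consequence of $q_{f\circ\phi}=q_f\circ\phi_*$. Note that the Hirsch--Smale remark you relegate to an aside is essentially this lemma (the nontrivial part being that the Smale--Hirsch invariant of an embedding is recorded by $q_f$), so developing that statement carefully --- rather than the generator-by-generator realization --- is the way to complete your argument; and it is the paper's explicit pass-move proof of the lemma, not a realization of generators, that feeds the signed count of Theorem \ref{thm:signed-number}.
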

\begin{rem}
Nowik \cite{Nowik} proved the same type theorem for closed surface. 
\end{rem}
In order to proof this theorem, we will use the following fact. 
Although this fact is already proven by Pinkall \cite{Pinkall}, 
we will prove this by an elementary method. 
\begin{lem}\label{lem:form}
Embeddings $f$ and $f'$ of $F$ into $S^3$ are regular homotopic if and only if 
$q_f = q_{f'}$. 
\end{lem}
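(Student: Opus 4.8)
The plan is to prove the two directions separately. The forward direction amounts to checking that $q_f$ is invariant under regular homotopy, and the reverse direction will be handled by reducing an arbitrary embedding, through a controlled sequence of moves, to one of finitely many explicit normal forms, so that no immersion theory is needed.

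For the \emph{``only if''} direction, suppose $h_t$ $(0\le t\le 1)$ is a regular homotopy with $h_0=f$ and $h_1=f'$, which after a small perturbation we may take to be generic. Fix a simple closed curve $\gamma$ on $F$ and set $n_t=lk(h_t\circ\gamma,(h_t\circ\gamma)^{+})$, the push-off being taken in the positive normal direction of the immersed surface $h_t(F)$ along $h_t\circ\gamma$. The integer $n_t$ can change only at the finitely many parameters where $h_t\circ\gamma$ acquires a self-intersection or where a double curve of $h_t(F)$ sweeps across $h_t\circ\gamma$. At a self-crossing of $h_t\circ\gamma$ the parallel copy $(h_t\circ\gamma)^{+}$ undergoes the same crossing change, so $n_t$ changes by $\pm 2$ (two crossing changes of equal sign); when $h_t\circ\gamma$ together with its parallel copy --- which lie on a common local sheet of $h_t(F)$ --- is pushed through another sheet, the linking number $lk(h_t\circ\gamma,(h_t\circ\gamma)^{+})$ can be computed inside a tubular neighbourhood of that local sheet and is therefore unchanged. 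Hence $n_t\bmod 2$ is constant, so $q_f(\gamma)=q_{f'}(\gamma)$. Since $\gamma$ was arbitrary, $q_f$ and $q_{f'}$ agree on every simple closed curve, in particular on a basis of $H_1(F;\mathbb{Z}_2)$; as both are quadratic refinements of the same $\mathbb{Z}_2$-intersection form on $H_1(F;\mathbb{Z}_2)$ --- which follows from bilinearity of the linking number together with $lk(f_*x,f_*y^{+})+lk(f_*y,f_*x^{+})\equiv x\cdot y\pmod 2$ --- they coincide, that is, $q_f=q_{f'}$.

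For the \emph{``if''} direction, fix once and for all a decomposition of $F$ as a $2$-disk $D$ with $n=\dim_{\mathbb{Z}_2}H_1(F;\mathbb{Z}_2)$ bands $B_1,\dots,B_n$ attached along disjoint arcs of $\partial D$, and choose simple closed curves $\gamma_i\subset D\cup B_i$ forming a basis of $H_1(F;\mathbb{Z}_2)$. For $\vec\epsilon=(\epsilon_1,\dots,\epsilon_n)\in\{0,1\}^n$ let $\Sigma(\vec\epsilon)$ be the standard model: the embedded surface obtained by taking $D$ to be a round disk, the cores of the bands to be the trivial configuration of parallel unknotted arcs lying in disjoint balls of the complementary $3$-ball, and giving $B_i$ framing $\epsilon_i$. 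I claim every embedding $f$ of $F$ into $S^3$ is regular homotopic to an embedding onto some $\Sigma(\vec\epsilon)$. Indeed, isotope $f(D)$ to a round disk (embedded $2$-disks in $S^3$ are standard), so that the bands lie in the complementary ball; then apply pass moves --- the local regular homotopy in which one sheet of a band is pushed transversally through another sheet of the surface --- to realise arbitrary crossing changes on the band cores, bringing them to the trivial configuration, and to push each band through itself, which alters its framing by an even integer and so lets each framing be normalised into $\{0,1\}$. Finally, one computes directly that $q_{\Sigma(\vec\epsilon)}(\gamma_i)=\epsilon_i$, so that, since a quadratic refinement of the intersection form on $H_1(F;\mathbb{Z}_2)$ is freely determined by its values on the basis $\gamma_1,\dots,\gamma_n$, the assignment $\vec\epsilon\mapsto q_{\Sigma(\vec\epsilon)}$ is a bijection onto the set of all such refinements. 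Combining this with the ``only if'' direction: if $q_f=q_{f'}$, then the models associated with $f$ and $f'$ have the same $\vec\epsilon$ and hence coincide, so $f$ and $f'$ are regular homotopic to one another.

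The hard part is making the normal-form step of the ``if'' direction rigorous. One must verify carefully that the local moves invoked are genuine regular homotopies of $F$ --- in particular that a full twist in a band can be created or removed by pushing the band through itself --- and that the pass moves used really do suffice to trivialise the band cores while only changing the framings by even amounts; the cleanest formulation of what is needed is that, once the cores have been brought to the trivial configuration, no isotopy invariant of the embedded disk-with-bands survives beyond the tuple of framing parities. (One could instead shortcut this via the Hirsch--Smale theorem, which identifies regular homotopy classes of immersions of $F$ into $S^3$ with $[F,V_2(\mathbb{R}^3)]\cong H^1(F;\mathbb{Z}_2)$, a set of cardinality $2^n$, after which only surjectivity of $f\mapsto q_f$ --- supplied by the models $\Sigma(\vec\epsilon)$ --- is required; but the aim here is an elementary argument.)
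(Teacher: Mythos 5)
Your ``only if'' direction is fine: it is in essence an elementary proof of the fact the paper simply quotes from Kato (that the mod~2 self-linking of a band is a regular homotopy invariant), and the reduction from simple closed curves to all of $H_1(F;\mathbb{Z}_2)$ via the quadratic-refinement property works. The problem is in the ``if'' direction, and it is not only the step you yourself flag as ``the hard part.'' Even granting that every embedding can be deformed so that its \emph{image} is the standard disk-with-bands $\Sigma(\vec\epsilon)$ with the correct framing parities, your concluding inference --- ``the models associated with $f$ and $f'$ have the same $\vec\epsilon$ and hence coincide, so $f$ and $f'$ are regular homotopic'' --- does not follow. Regular homotopy is an equivalence relation on \emph{maps}, not on images: for the standard embedding $e$ onto $\Sigma(\vec\epsilon)$ and \emph{any} diffeomorphism $\phi$ of $F$, the composite $e\circ\phi$ is again an embedding onto $\Sigma(\vec\epsilon)$, yet by your own ``only if'' direction $e\circ\phi\not\sim e$ whenever $q_e\circ\phi_*\neq q_e$ (for instance $\phi$ a Dehn twist about a curve $c$ with $q_e([c])=0$). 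That precomposition by a diffeomorphism can change the regular homotopy class while leaving the image untouched is exactly the phenomenon Theorem~1 of the paper is about, so the gap cannot be waved away; and note that even the ``cleanest formulation'' you propose for the hard part (no isotopy invariant of the embedded disk-with-bands beyond the framing parities) is again a statement about images, so it would not close the gap either. To repair the argument you would have to show that your reduction lands on one fixed \emph{parametrized} standard embedding $e_{\vec\epsilon}$ (which requires analysing the diffeomorphisms of $F$ preserving the disk-and-band structure), or else abandon the normal form and compare $f$ with $f'$ directly.

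The paper does the latter: it takes a spine $S=S_1\vee\cdots\vee S_{2g+k-1}$, uses pass moves to deform $f|_{N(S_i)}$ into $f'|_{N(S_i)}$ band by band (matching the cores as knots and then killing the even framing difference), so the parametrizations never get out of sync, and then promotes this to all of $F$ by the retraction trick $f\sim f\circ r_S=(f|_{N(S)})\circ r_S\sim(f'|_{N(S)})\circ r_S=f'\circ r_S\sim f'$. Incidentally, the Hirsch--Smale counting argument you mention in parentheses \emph{would} close your gap (the $2^{n}$ models realize all quadratic forms, hence exhaust the $2^{n}$ immersion classes, and the form is constant on each class by the ``only if'' direction), but you explicitly set it aside, and it is precisely the immersion-theoretic machinery the paper's elementary proof is designed to avoid.
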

\begin{proof}
At first, we show that 
if $f$ and $f'$ are regular homotopic then $q_f = q_{f'}$. 
For any $x \in H_1(F; \mathbb{Z}_2)$, we take a simple closed curve $\xi$ on $F$ 
which represents $x$ and let $N(\xi)$ be a regular neighborhood of $\xi$ in $F$. 
Since $f$ and $f'$ are regular homotopic, $f|_{N(\xi)}$ and $f'|_{N(\xi)}$ 
are regular homotopic. 
Let $f(\xi)^+$ (resp. $f'(\xi)^+$) be a simple closed curve which is 
$f(\xi)$ (resp. $f'(\xi)$) pushed into the positive normal direction of $f(N(\xi))$ 
(resp. $f'(N(\xi))$), 
then $lk( f(\xi), f(\xi)^+) = lk( f'(\xi), f'(\xi)^+) \mod 2$ by \cite{Kato}. 
Therefore, $q_f (x) = q_{f'} (x)$. 

\begin{figure}[ht]
\begin{center}
\includegraphics[height=4.5cm]{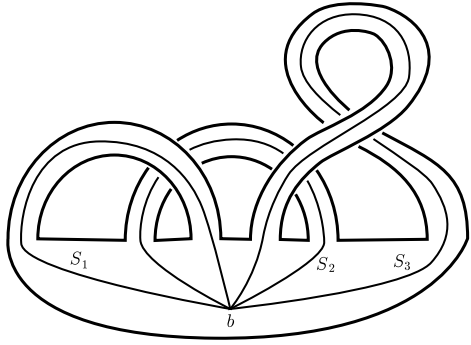}
\end{center}
\caption{An example of spine}
\label{fig:spine}
\end{figure}
Next, we prove that if $q_f = q_{f'}$ then $f$ and $f'$ are regular homotopic. 
Let $g$ be the genus of $F$ and $k$ be the number of connected components of 
$\partial F$. 
Then, as drawn in Figure \ref{fig:spine}, 
we take a bouquet $S$ in $F$ which is constructed from $2g+k-1$ simple closed curves 
$S_1, \ldots, S_{2g+k-1}$ by gluing in one point $b$, such that 
$F \setminus S$ is $k$ annuli. 
This $S$ is called a {\em spine\/} of $F$. 
Let $N(S_i)$ be a thin regular neighborhood of $S_i$ in $F$ and $N(S)$ be the union of 
$N(S_1), \ldots, N(S_{2g+k-1})$, then $N(S)$ is a regular neighborhood of $S$ in $F$. 
Let $r_S$ be an embedding from $F$ to itself such that $r_S(F) = N(S)$, $r_S|_S = id$, 
and $r_S$ is isotopic to the identity. 
By a regular homotopy, we deform $f$ and $f'$ such that $f(b) = f'(b) = b$. 
Since $q_f = q_{f'}$, $q_f([S_i]) = q_{f'}([S_i])$ for each $i$, in other word, 
$lk (f(S_i), f(S_i)^+) = lk( f'(S_i), f'(S_i)^+) \mod 2$ for each $i$. 
By this condition on linking numbers, 
we can construct a regular homotopy between $f|_{N(S_i)}$ and $f'|_{N(S_i)}$ 
which is a composition of {\em pass moves\/} (see Figure \ref{fig:pass-move}) 
and isotopies as follows. 
Since pass moves are crossing changes for the knot $f(S_i)$ in $S^3$, 
we can change $f(S_i)$ into $f'(S_i)$ by some pass moves and isotopies. 
Let $f_1$ be the embedding of $N(S_i)$ regular homotopic to $f|_{N(S_i)}$ 
which is obtained as a result of above pass moves and isotopies. 
Then, by the previous condition on linking numbers, 
the difference between the number of twists of $f_1(N(S_i))$ and 
that of $f'(N(S_i))$ 
is an even integer. 
Since this difference is dissolved by pass moves 
(see Figure \ref{fig:double-twist}), 
we can construct regular homotopy between $f_1$ and $f'|_{N(S_i)}$, 
and as a result, between $f|_{N(S_i)}$ and $f'|_{N(S_i)}$. 
Since $N(S)$ is a union of $N(S_i)$'s, we can construct a regular homotopy 
between $f |_{N(S)}$ and $f'|_{N(S)}$. 
From the above construction, we can obtain a sequence of regular homotopies, 
$f \sim f \circ r_S = (f |_{N(S)}) \circ r_S  \sim 
(f'|_{N(S)}) \circ r_S = f' \circ r_S \sim f'$. 
Therefore, $f$ is regular homotopic to $f'$. 
\end{proof}
\begin{figure}[ht]
\begin{center}
\includegraphics[height=2.5cm]{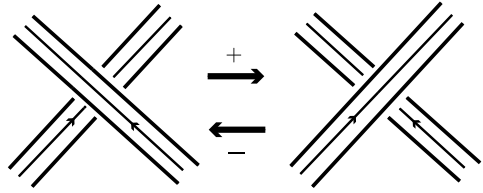}
\end{center}
\caption{Left to right is $+$ pass move and right to left is $-$ pass move }
\label{fig:pass-move}
\end{figure}
\begin{figure}[ht]
\begin{center}
\includegraphics[height=4.5cm]{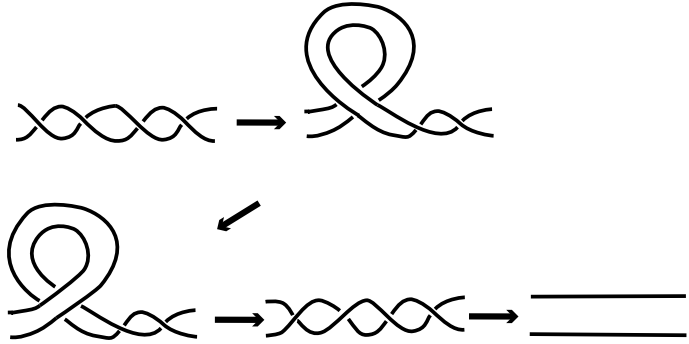}
\end{center}
\caption{A pass move is used in the second move}
\label{fig:double-twist}
\end{figure}
Now, we prove Theorem \ref{thm:MCG}. 
\begin{proof}[Proof of Theorem \ref{thm:MCG}]
We remark that $q_f \circ \phi_* = q_{f \circ \phi}$. 
If $\phi \in \mathcal{O}_{q_f}$ then $q_{f \circ \phi} = q_f \circ \phi_* = q_f$. 
By Lemma \ref{lem:form}, $f \circ \phi$ is regular homotopic to $f$, 
in other word, $\phi \in \mathcal{E}_f$. 
On the other hand, if $\phi \in \mathcal{E}_f$, then $f \circ \phi$ is 
regular homotopic to $f$. 
By Lemma \ref{lem:form}, $q_{f \circ \phi} = q_f$, that is to say, 
$q_f \circ \phi_* = q_f$. 
\end{proof}
Furthermore, we see that $\mathcal{E}_f$ completely determines the regular homotopy class 
of $f$. 
\begin{prop}
$f$ is regular homotopic to $f'$ if and only if $\mathcal{E}_f = \mathcal{E}_{f'}$
\end{prop}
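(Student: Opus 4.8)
The plan is to reduce everything to a statement about the quadratic forms $q_f$ and $q_{f'}$ by combining Lemma~\ref{lem:form} with Theorem~\ref{thm:MCG}. The "only if" direction is easy and can be seen directly: if $h_t\ (0\leq t\leq 1)$ is a regular homotopy from $f$ to $f'$ and $\phi\in\mathcal{M}_F$ is represented by $\psi\in\mathrm{Diff}_+(F)$, then $h_t\circ\psi$ is a regular homotopy from $f\circ\psi$ to $f'\circ\psi$; since $\sim$ is an equivalence relation and $f\sim f'$, we get $f\circ\phi\sim f \iff f'\circ\phi\sim f'$, that is, $\mathcal{E}_f=\mathcal{E}_{f'}$. (Equivalently: $f\sim f'$ gives $q_f=q_{f'}$ by Lemma~\ref{lem:form}, hence $\mathcal{E}_f=\mathcal{O}_{q_f}=\mathcal{O}_{q_{f'}}=\mathcal{E}_{f'}$ by Theorem~\ref{thm:MCG}.)

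So the real content is the converse: assume $\mathcal{E}_f=\mathcal{E}_{f'}$; by Theorem~\ref{thm:MCG} this means $\mathcal{O}_{q_f}=\mathcal{O}_{q_{f'}}$, and by Lemma~\ref{lem:form} it is enough to deduce $q_f=q_{f'}$. In other words I must show that the subgroup $\mathcal{O}_q$ of $\mathcal{M}_F$ determines the quadratic refinement $q$ of the mod-$2$ intersection form. I would do this with Dehn twists. For a non-separating simple closed curve $c$ on $F$, the twist $t_c\in\mathcal{M}_F$ acts on $H_1(F;\mathbb{Z}_2)$ by the transvection $x\mapsto x+(x\cdot[c])[c]$, and using $q(x+y)=q(x)+q(y)+x\cdot y$ one checks that $t_c\in\mathcal{O}_q$ if and only if $q([c])=1$ (the "only if" uses that some $x$ has $x\cdot[c]=1$, which holds because $c$ is non-separating). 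Applying this to $q_f$ and $q_{f'}$ and using $\mathcal{O}_{q_f}=\mathcal{O}_{q_{f'}}$ gives $q_f([c])=q_{f'}([c])$ for every non-separating simple closed curve $c$. Every homology class $x$ outside the radical $R$ of the intersection form is represented by such a curve --- take a primitive integral lift of $x$, represent it by a simple closed curve, which must be non-separating since a separating curve has class in $R$ --- so $q_f$ and $q_{f'}$ agree on $H_1(F;\mathbb{Z}_2)\setminus R$. To finish, for $x\in R$ pick any $y\notin R$; then $x+y\notin R$ and $x\cdot y=0$, so $q_f(x)=q_f(x+y)+q_f(y)=q_{f'}(x+y)+q_{f'}(y)=q_{f'}(x)$. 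Hence $q_f=q_{f'}$, and Lemma~\ref{lem:form} gives $f\sim f'$.

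The main obstacle is this converse, i.e. recovering $q$ from the group $\mathcal{O}_q$. The transvection computation itself is short; what needs genuine care is (i) the fact that non-separating simple closed curves realize every homology class outside $R$, and (ii) the very last step, which extends the equality over $R$ and therefore requires the existence of a class outside $R$ --- that is, $F$ having positive genus. When $F$ is planar the intersection form vanishes identically, every $t_c$ acts trivially on $H_1(F;\mathbb{Z}_2)$, and the stabilizer carries no information about $q$; that case (disks and annuli in particular) should be examined on its own, and it is where the precise scope of the statement has to be checked.
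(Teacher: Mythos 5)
Your argument is essentially the paper's, run in the forward direction rather than contrapositively: the paper also reduces everything to the quadratic forms via Lemma~\ref{lem:form} and Theorem~\ref{thm:MCG}, and also uses the transvection action of a Dehn twist to detect the value of $q$ on a simple closed curve --- it picks $\alpha$ with $q_f(\alpha)=1$, $q_{f'}(\alpha)=0$, represents it by a curve $a$, and asserts $T_a\in\mathcal{O}_{q_f}\setminus\mathcal{O}_{q_{f'}}$. What you add, and the paper omits, is exactly the two points you isolate: the assertion $T_a\notin\mathcal{O}_{q_{f'}}$ needs some $x$ with $x\cdot\alpha=1$, i.e.\ $\alpha$ outside the radical $R$, which the paper never arranges; and extending $q_f=q_{f'}$ over $R$ needs a class outside $R$, i.e.\ positive genus. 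Your caution about the planar case is warranted: if $F$ is planar then $H_1(F;\mathbb{Z}_2)$ is generated by boundary classes, which every $\phi\in\mathcal{M}_F$ fixes (since $\phi|_{\partial F}=id$), so $\mathcal{O}_q=\mathcal{M}_F$ for every $q$ and, by Theorem~\ref{thm:MCG}, $\mathcal{E}_f=\mathcal{M}_F$ for every embedding; yet an untwisted annulus and one with a single full twist have $q_f\neq q_{f'}$, hence are not regularly homotopic by Lemma~\ref{lem:form}. So the proposition as literally stated fails for planar $F$ with at least two boundary components (it is trivially true for the disk), and your radical argument is the right repair when the genus is at least one: if $q_f$ and $q_{f'}$ agreed off $R$ they would agree on $R$ as well, so a class where they differ can be chosen non-radical, represented by a non-separating curve, and the twist argument then goes through. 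In short, your proof is correct on the statement's actual range of validity and is tighter than the paper's own proof, which silently assumes the chosen $\alpha$ is not in the radical.
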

\begin{proof}
We assume that $f$ is regular homotopic to $f'$. 
If $\phi \in \mathcal{E}_f$, then $f \circ \phi \sim f$. 
Therefore, $f' \circ \phi \sim f \circ \phi \sim f \sim f'$, hence, 
$\phi \in \mathcal{E}_f'$. Similarly, we can see 
that if $\phi \in \mathcal{E}_f'$ then $\phi \in \mathcal{E}_f$. 

On the other hand, we assume that $f \not\sim f'$. 
Then, by Lemma \ref{lem:form}, $q_f \not= q_{f'}$. 
Therefore, there is an element $\alpha$ of $H_1 (F ; \mathbb{Z}_2)$ 
such that $q_f(\alpha) \not= q_{f'}(\alpha)$, 
say $q_f(\alpha)=1$, $q_{f'}(\alpha)=0$. 
Let $a$ be a simple closed curve on $F$ which represents $\alpha$, 
then $T_a \in \mathcal{O}_{q_f}$ but $T_a \not\in \mathcal{O}_{q_{f'}}$. 
By Theorem \ref{thm:MCG}, $T_a \in \mathcal{E}_f$ but $T_a \not\in \mathcal{E}_{f'}$. 
Therefore, $\mathcal{E}_f \not= \mathcal{E}_{f'}$. 
\end{proof}
\section{A formula of a signed number of pass moves}
We remark that, in the proof of Lemma \ref{lem:form}, 
we use only pass moves and isotopies as a regular homotopy between 
$f$ and $f'$. 
We call a regular homotopy which is a composition of pass moves and isotopies 
{\em pass homotopy\/}. 
In the following, we investigate on some formula which gives a number of 
``signed" pass moves. 

As in the proof of Lemma \ref{lem:form}, 
we choose a spine $S = S_1 \vee \cdots \vee S_{2g+k-1}$ of $F$, 
and orient the simple closed curves $S_i$. 
A pass move which changes a negative crossing into a positive crossing 
is called a $+$ pass move, and 
a pass move which changes a positive crossing into a negative crossing 
is called a $-$ pass move (see Figure \ref{fig:pass-move}). 
We choose a basis $\{\alpha_i \,; 1 \leq i \leq 2g+k-1 \}$ of $H_1(F; \mathbb{Z})$ 
by $\alpha_i = [S_i]$, and determine a quadratic form 
$\tilde{q}_f : H_1(F; \mathbb{Z}) \to \mathbb{Z}$ by 
$\tilde{q}_f(x) = lk(f_*(x), f_*(x)^+)$. 

\begin{thm}\label{thm:signed-number}
Let $\phi$ be an element of $\mathcal{E}_f$. 
For any pass homotopy from $f$ to  $f \circ \phi$, 
$$
\begin{aligned}
&(\text{ the number of $+$ pass moves }) - (\text{ the number of $-$ pass moves })
 \\
&=\frac{1}{2} 
\left\{ \tilde{q}_f ((\phi)_*(\alpha_1 + \alpha_2+ \cdots + \alpha_{2g+k-1})) 
- \tilde{q}_f (\alpha_1 + \alpha_2+ \cdots + \alpha_{2g+k-1}) \right\}. 
\end{aligned}
$$
\end{thm}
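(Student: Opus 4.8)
The plan is to reduce the count of signed pass moves to a computation of how the self-linking quadratic form $\tilde q_f$ changes under a single pass move, and then to track its cumulative change along the pass homotopy. First I would fix a pass homotopy $f = h_0 \rightsquigarrow h_N = f\circ\phi$ consisting of isotopies interspersed with $N$ pass moves. Isotopies do not affect any of the linking numbers $lk(h_t{}_*(x), h_t{}_*(x)^+)$, so the only contributions come from the pass moves. Writing $\sigma = \alpha_1 + \cdots + \alpha_{2g+k-1}$, the right-hand side is exactly $\tfrac12\big(\tilde q_{f\circ\phi}(\sigma) - \tilde q_f(\sigma)\big)$ since $\tilde q_f\circ\phi_* = \tilde q_{f\circ\phi}$ (the integral analogue of the identity $q_f\circ\phi_* = q_{f\circ\phi}$ already used in the proof of Theorem \ref{thm:MCG}). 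So it suffices to show that each $+$ pass move increases $\tilde q(\sigma)$ by $2$, each $-$ pass move decreases it by $2$, and to sum.

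The key computation is therefore local: what does a single pass move do to $lk(c, c^+)$ for a class $c$ represented by a disjoint union of the oriented spine curves? A pass move is supported in a small ball and exchanges two strands; if those two strands belong to (pushed-off copies of) the same or different $S_i$'s, one checks directly from the picture in Figure \ref{fig:pass-move} that a $+$ pass move changes the relevant pairwise linking numbers so that the total self-linking of the union $f_*(\sigma)$ goes up by exactly $2$, and symmetrically for a $-$ pass move. Here it is important that $\sigma$ is the sum of \emph{all} the $\alpha_i$: a pass move between a strand of $S_i$ and a strand of $S_j$ (possibly $i=j$) alters $lk(S_i, S_i^+)$, $lk(S_j,S_j^+)$, and $lk(S_i,S_j)$ in a way that only combines to a clean $\pm 2$ when one evaluates $\tilde q$ on the class carried by the union of all strands present, i.e. on $\sigma$; this is the conceptual reason the formula features $\sigma$ rather than an individual generator. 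I would verify the bookkeeping by choosing the spine so that $N(S)$ is embedded with the strands of all the $N(S_i)$ running in parallel near the wedge point $b$, so every pass move occurring in the homotopy of Lemma \ref{lem:form} is indeed a crossing change between strands of this configuration.

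Granting the local claim, the proof finishes immediately: summing over the $N$ pass moves, the total change $\tilde q_{f\circ\phi}(\sigma) - \tilde q_f(\sigma)$ equals $2\big((\#\,+\text{ pass moves}) - (\#\,-\text{ pass moves})\big)$, which rearranges to the displayed equation. A subtlety to be careful about is that the theorem asserts this for an \emph{arbitrary} pass homotopy from $f$ to $f\circ\phi$, not merely the particular one built in Lemma \ref{lem:form}; but this is automatic from the argument, since the left-hand side is computed for the given homotopy while the right-hand side depends only on the endpoints $f$ and $f\circ\phi$ — so the identity simultaneously proves that the signed pass-move count is a homotopy invariant.

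The main obstacle I expect is the sign/orientation bookkeeping in the local computation: making precise, with consistent conventions for the positive normal push-off and for the orientations of the $S_i$, that a $+$ pass move contributes $+2$ (and not $-2$, and not $0$) to $\tilde q(\sigma)$ in every case — same component versus different components, and with the two strands cooriented versus anti-cooriented relative to the pushed-off copies. Everything else is routine once that lemma is nailed down.
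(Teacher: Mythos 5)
Your proposal follows essentially the same route as the paper: expand $\tilde{q}_f(\alpha_1+\cdots+\alpha_{2g+k-1})$ bilinearly into pairwise linking numbers, check that a single $+$ (resp.\ $-$) pass move changes the total by $+2$ (resp.\ $-2$) --- the paper records this as the table of $\Delta_{i,j}$, with $\pm 2$ in the diagonal entry for a pass of $S_{i_0}$ with itself and $\pm 1$ in each of the two symmetric entries $(i_0,j_0)$, $(j_0,i_0)$ for a pass between distinct spine curves --- and then sum over all pass moves in the homotopy. The only cosmetic difference is that your description of which linking numbers change in the mixed case is slightly off (it is $lk(f(S_{i_0}),f(S_{j_0})^+)$ and $lk(f(S_{j_0}),f(S_{i_0})^+)$ that each change by $\pm 1$, not the self-linkings), and the sign bookkeeping you defer is exactly what the paper's $\Delta_{i,j}$ case analysis supplies.
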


\begin{proof}
By Theorem \ref{thm:MCG}, 
$q_f ( \phi_*(x) ) = q_f (x)$ for all $x \in H_1(F; \mathbb{Z}_2)$, 
hence,   
$\tilde{q}_f ((\phi)_*(\alpha_1 + \alpha_2+ \cdots + \alpha_{2g+k-1})) 
- \tilde{q}_f (\alpha_1 + \alpha_2+ \cdots + \alpha_{2g+k-1})$ is an even integer. 
Since a linking form is bilinear, 
$$
\begin{aligned}
\frac{1}{2} 
& \{ \tilde{q}_f ((\phi)_*(\alpha_1 + \alpha_2+ \cdots + \alpha_{2g+k-1})) 
- \tilde{q}_f (\alpha_1 + \alpha_2+ \cdots + \alpha_{2g+k-1}) \} \\
& = 
\frac{1}{2} 
\left\{ \sum_{i,j} lk \left( (f\circ \phi)_*(\alpha_i), \, 
((f\circ \phi)_*(\alpha_j))^+ \right) 
- lk \left( f_*(\alpha_i), \, (f_*(\alpha_j))^+ \right) \right\}. 
\end{aligned}
$$
Here, we set 
$$
\Delta_{i,j} = lk \left( (f\circ \phi)_*(\alpha_i), \, 
((f\circ \phi)_*(\alpha_j))^+ \right) 
- lk \left( f_*(\alpha_i), \, (f_*(\alpha_j))^+ \right). 
$$

\begin{figure}[ht]
\begin{center}
\includegraphics[height=2.5cm]{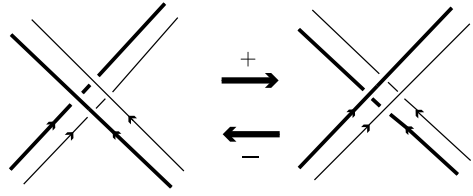}
\end{center}
\caption{Thick line is a curve in the spine, and thin line is a curve in the spine 
pushed off to the positive normal direction}
\label{fig:Delta}
\end{figure}

We assume that $f\circ \phi$ is obtained from $f$ are regular homotpic 
by one pass move. 
We determine the value of $\Delta_{i,j}$ according to the cases on the pass move
(see Figure \ref{fig:Delta}). 
If the pass move is the $+$ pass move (resp. $-$ pass move) among 
$S_{i_0}$ and itself, then 
$$
\Delta_{i,j} = 
\begin{cases}
+2 \quad (\text{resp.} -2) & \qquad i=j=i_0 \\
0 \quad (\text{resp.} 0) & \qquad otherwise. 
\end{cases} 
$$
If the pass move is the $+$ pass move (resp. $-$ pass move) among 
$S_{i_0}$ and $S_{j_0}$ $(i_0 \not= j_0)$, then 
$$
\Delta_{i,j} = 
\begin{cases}
+1 \quad (\text{resp.} -1) & \qquad (i,j)=(i_0,j_0) \\
+1 \quad (\text{resp.} -1) & \qquad (i,j)=(j_0,i_0) \\
0 \quad (\text{resp.} 0) & \qquad otherwise. 
\end{cases} 
$$
In each cases, 
for the $+$ pass move, $\sum_{i,j} \Delta_{i,j} = +2$, 
for the $-$ pass move, $\sum_{i,j} \Delta_{i,j} = -2$. 

In general, there are several pass moves in a pass homotopy from $f$ to $f\circ \phi$. 
By applying the same argument as above for each pass move, we show the equation we need. 
\end{proof}
\begin{rem}
We remark that the number of signed pass moves
of Theorem \ref{thm:signed-number} depends on a choice of 
an oriented spine of $F$. 
We choose an oriented spine $S$ in $F$. 
Then, by Theorem \ref{thm:signed-number}, 
$$
\mathcal{P}_S(\phi) 
= ( \text{ the number of } + \text{ pass moves }) - 
( \text{ the number of } - \text{ pass moves })
$$
is well-defined as a function of $\phi \in \mathcal{E}_f$. 
This function $\mathcal{P}_S$ satisfies 
$\mathcal{P}_S (\phi \circ \psi) = \mathcal{P}_{\psi(S)} (\phi) + 
\mathcal{P}_S (\psi)$ for $\phi, \psi \in \mathcal{E}_f$, 
since 
$$
\begin{aligned}
\mathcal{P}_S (\phi \circ \psi) =& 
\frac{1}{2} 
\left\{ \tilde{q}_f ((\phi \circ \psi)_*(\alpha_1 + \cdots + \alpha_{2g+k-1})) 
- \tilde{q}_f (\alpha_1 + \cdots + \alpha_{2g+k-1}) \right\}\\
=& 
\frac{1}{2} 
\left\{ \tilde{q}_f (\phi_* (\psi_*\alpha_1 + \cdots + \psi_*\alpha_{2g+k-1})) 
- \tilde{q}_f (\psi_*\alpha_1 + \cdots + \psi_*\alpha_{2g+k-1}) \right\} \\
&+ \frac{1}{2} 
\left\{ \tilde{q}_f (\psi_*(\alpha_1 + \cdots + \alpha_{2g+k-1})) 
- \tilde{q}_f (\alpha_1 + \cdots + \alpha_{2g+k-1}) \right\} \\
=& \mathcal{P}_{\psi(S)} (\phi) + \mathcal{P}_S (\psi). 
\end{aligned}
$$
\end{rem}


\begin{thebibliography}{99}
%
\bibitem{Kato} {\bf M. ~Kato},
{\it Topology of bands}, 
S\={u}rikaisekikenky\={u}sho K\={o}ky\={u}roku \ 243 (1975) 88--96. 
%
\bibitem{Nowik} {\bf T. ~Nowik}, 
{\it Automorphisms and embeddings of surfaces and quadruple points of
regular homotopies},
J. Differential Geometry \ 58 (2001) 421--455. 
%
\bibitem{Pinkall} {\bf U. ~Pinkall}, 
{\it Regular homotopy classes of immersed surfaces}, 
Topology \ 24 (1985) 421--434. 
%
\end{thebibliography}
\end{document}